\documentclass{amsart}
\usepackage{amsmath, amssymb, amsthm, amsfonts}
\usepackage{hyperref}
\usepackage{cleveref}

\usepackage{blkarray}
\usepackage{enumerate}

\usepackage{graphicx}
\usepackage[all]{xy} 
\usepackage{mathrsfs}
\usepackage{eufrak}

\theoremstyle{plain}
\newtheorem{thm}{Theorem}[section]
\newtheorem{lem}[thm]{Lemma}
\newtheorem{prop}[thm]{Proposition}
\newtheorem{cor}[thm]{Corollary}

\theoremstyle{definition}
\newtheorem{defn}[thm]{Definition}

\theoremstyle{remark}
\newtheorem{rem}[thm]{Remark}
\newtheorem{Ex}[thm]{Example}

\begin{document}

\title{Degree of the 3-secant variety}

\author{Doyoung Choi} 
\date{\today}

\address{Doyoung Choi \\
Department of Mathematical Sciences, Korea Advanced Institute of Science and Technology (KAIST), 373-1 Gusung-dong, Yusung-Gu, Daejeon, Republic of Korea}
\email{cdy4019@kaist.ac.kr}

\begin{abstract} 
    In this paper, we present a formula for the degree of the 3-secant variety of a nonsingular projective variety embedded by a 5-very ample line bundle. The formula is provided in terms of Segre classes of the tangent bundle of a given variety. 
    We use the generalized version of the double point formula to reduce the calculation into the case of the 2-secant variety. As a resolution of the 2-secant variety, we use secant bundle and compute powers of desired algebraic cycles.
\end{abstract}

\keywords{double point formula, higher very ampleness, secant bundle, Segre class, secant variety}
\subjclass[2020]{Primary:~14N07; Secondary:~14C25,~14N10}

\maketitle
\section{Introduction}
  Let $X$ be a projective variety over an algebraically closed field of characteristic zero. The \textit{$k$-secant variety}, denoted $\sigma_k(X)$, is defined as the Zariski closure of the union of $k$-secant hyperplanes $\mathbb{P}^{k-1}$ to $X$ in projective space $\mathbb{P}^N$. For example, the 3-secant variety, denoted $\sigma_3(X)$, is the Zariski closure of the union of all secant planes to $X$ in $\mathbb{P}^N$. The study of secant varieties is a classic topic in algebraic geometry with an emphasis on determining defining equations and syzygies as well as understanding the singularities of these varieties.

  One traditional topic of study is the specification of the double points of linear projections of a given variety. Let $X \subset \mathbb{P}^N$ be an $n$-dimensional nonsingular projective variety. Let $\delta$ be the number of double point of the image of $X$ under the generic linear projection $X \to \mathbb{P}^{2n}$. In \cite[Corollary 8.2.9]{joins1999} it is shown that $\delta$ can be represented in terms of the Segre classes of the tangent bundle $ T_X$ as:
 $$ 2\delta = (\deg X)^2 - \sum_{ k \geq 0} {2n+1 \choose n-k} \deg s_k(T_X).   $$ 
   Furthermore, if the embedding of $X$ into the projective space is 3-very ample, the double point formula provides a degree formula for the 2-secant variety $\sigma_2(X)$ for $X$. 
The double point formula is one of the corollary of \cite[Theorem 8.2.8]{joins1999} and more generally, \cite[Theorem 2.1.15]{joins1999}. The following theorem is called as the refined Bezout's theorem:

\begin{thm} [{\cite[Theorem 2.1.15]{joins1999}}]
 Let $V \subset \mathbb{P}^N$ be an equi-dimensional closed subscheme with $\mathcal{L}:= \mathcal{O}_V(1)$. Let $\sigma_0 , \cdots , \sigma_d$ be global sections of $\mathcal{L}$. Let $v^i(\underline{\sigma}, V)$ be the $v$-cycle in the Vogel's intersection theory and $\sigma : V \dashrightarrow \mathbb{P}^d$ be a rational map defined by global section $\sigma_0, \cdots , \sigma_d$. Denote $\deg (\Gamma/\sigma(\Gamma))$ be the degree of the restriction of $\sigma$ on $\Gamma$ where $\Gamma$ is an irreducible component of $V$. 
   \begin{equation*}
      \deg V = \sum_i \deg v^i(\underline{\sigma}, V) + \sum_{\Gamma \subset V} \deg (\Gamma/\sigma(\Gamma)) \deg \sigma(\Gamma). 
   \end{equation*}  
   \end{thm}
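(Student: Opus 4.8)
The plan is to realize the two terms on the right-hand side as the two kinds of contributions that arise when one intersects $V$ with successive generic members of the linear system $\Lambda = \langle \sigma_0, \dots, \sigma_d\rangle$, keeping careful track of degrees throughout. The guiding principle is that a generic member of $\Lambda$ has divisor class $c_1(\mathcal{L})$, so intersecting with it preserves the $\mathcal{L}$-degree of a cycle; at each stage the cycle produced splits into a part lying inside the base locus $W$ (which is frozen and accumulated into the $v$-cycle) and a part not lying in $W$ (on which $\sigma$ is generically defined, and which is carried to the next stage). I would first set up the Stückrad--Vogel intersection algorithm producing these cycles, then prove an additivity-of-degrees statement stage by stage, and finally analyze when the carried-forward cycle terminates, identifying its final $\mathcal{L}$-degree with the image term.

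Concretely, set $C_0 = [V]$, which is purely $n$-dimensional with $n = \dim V$. Inductively, given a purely $(n-k+1)$-dimensional cycle $C_{k-1}$, choose a generic section $\tau_k \in \Lambda$ and form the cycle $H_k \cdot C_{k-1}$ with $H_k = \{\tau_k = 0\}$, decomposing it as
\[
 H_k \cdot C_{k-1} = v_k + C_k,
\]
where $v_k$ collects the components supported on $W$ and $C_k$ collects those not supported on $W$. Because $\tau_k$ is a section of $\mathcal{L}$ we have $[H_k] = c_1(\mathcal{L})$, and hence for the $(n-k)$-dimensional cycle $H_k \cdot C_{k-1}$ the degree computation gives
\[
 \deg\bigl(H_k \cdot C_{k-1}\bigr) = \int \bigl(c_1(\mathcal{L}) \cap C_{k-1}\bigr)\cdot c_1(\mathcal{L})^{\,n-k} = \deg C_{k-1}.
\]
Since $\deg$ is additive on cycles, this yields the stage-wise identity $\deg C_{k-1} = \deg v_k + \deg C_k$, and telescoping from $k=1$ to $k=n$ gives $\deg V = \sum_{k=1}^{n}\deg v_k + \deg C_n$, with $v_k$ of dimension $n-k$; re-indexing $v^i := v_{\,n-i}$ converts the first sum into $\sum_i \deg v^i(\underline\sigma, V)$.

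It then remains to identify $\deg C_n$ with the image term, which I would do component by component. For an irreducible component $\Gamma \subset V$ with $e = \dim\sigma(\Gamma)$, a generic $H_k$ is the pullback under $\sigma$ of a generic hyperplane of $\mathbb{P}^d$, so after $k$ cuts the image of the non-$W$ part of the carried cycle is $\sigma(\Gamma)$ cut by $k$ generic hyperplanes, of dimension $e-k$. Once $k>e$ a generic hyperplane of $\mathbb{P}^d$ misses the finitely many image points already cut out, so the carried-forward cycle over $\Gamma$ becomes empty and everything produced from $\Gamma$ at that stage is supported on $W$; consequently $C_n$ receives a contribution from $\Gamma$ exactly when $e = n$, i.e. when $\sigma|_\Gamma$ is generically finite onto its image. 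In that case the $0$-cycle $C_n|_\Gamma$ consists of the $\deg\sigma(\Gamma)$ points cut out of $\sigma(\Gamma)$ by $n$ generic hyperplanes, each pulled back to $\deg(\Gamma/\sigma(\Gamma))$ points of $\Gamma$, so $\deg C_n = \sum_\Gamma \deg(\Gamma/\sigma(\Gamma))\deg\sigma(\Gamma)$, the remaining components contributing $0$ under the convention $\deg(\Gamma/\sigma(\Gamma)) = 0$ when $\sigma|_\Gamma$ is not generically finite. Combining the two displays finishes the argument.

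The main obstacle I expect is the genericity input underlying the decomposition $H_k \cdot C_{k-1} = v_k + C_k$: one must guarantee a member of $\Lambda$ whose divisor meets $C_{k-1}$ properly and whose distinguished (non-$W$) components are controlled, so that the splitting is well defined and the degrees add exactly. This is precisely where the Stückrad--Vogel moving lemma and a Bertini--Kleiman type argument over an algebraically closed field of characteristic zero enter, together with care at the base locus $W$, where $\sigma$ is undefined and the naive projection formula across the rational map must be justified. A clean way to control this last point is to pass to the blow-up $\pi\colon \widetilde V \to V$ of $W$, on which $\sigma$ becomes a morphism $\widetilde\sigma$ with $\widetilde\sigma^{*}\mathcal{O}(1) = \pi^{*}\mathcal{L}(-E)$, and to reinterpret the cycles $v_k$ as the exceptional contributions of $E$; the termination analysis of the carried-forward cycle and the bookkeeping of the multiplicities $\deg(\Gamma/\sigma(\Gamma))$ form the second delicate point.
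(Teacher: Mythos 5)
The paper does not prove this statement: it is quoted verbatim as Theorem 2.1.15 of Flenner--O'Carroll--Vogel and used as a black box, so there is no internal proof to compare yours against. Judged on its own, your outline is a faithful reconstruction of the standard St\"uckrad--Vogel argument, and the main ideas are correct: degree is preserved under intersection with a generic member of the system because such a member lies in the class $c_1(\mathcal{L})=c_1(\mathcal{O}_V(1))$; the $W$-supported pieces accumulate into the $v$-cycles; and the surviving $0$-cycle computes $\sum_\Gamma \deg(\Gamma/\sigma(\Gamma))\deg\sigma(\Gamma)$ because, when $\dim\sigma(\Gamma)<\dim\Gamma$, the carried cycle drains entirely into $W$ once the generic linear sections of $\mathbb{P}^d$ miss the finitely many remaining image points. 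Two points deserve more care than you give them. First, the order of operations: in the Vogel algorithm one splits off the $W$-supported part of the current cycle \emph{before} intersecting with $H_k$, since every member of the system contains $W$ and the proper intersection product is undefined on a component lying in $W$; your ``intersect, then split'' version works only because $C_{k-1}$ is by construction free of $W$-components, and it breaks at the very first step if $V$ itself has an irreducible component inside $W$ --- a case the theorem allows, with such components contributing a top-dimensional $v$-cycle and, by convention, nothing to the image term. Second, the identification $\deg C_n = \sum_\Gamma \deg(\Gamma/\sigma(\Gamma))\deg\sigma(\Gamma)$ requires the final intersection multiplicities over a general point of $\sigma(\Gamma)$ to equal the geometric multiplicity of $\Gamma$ in $[V]$, which uses generic transversality and separability of $\sigma|_\Gamma$ (characteristic zero), exactly the Bertini--Kleiman input you flag. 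With those repairs your plan is the standard proof of the refined B\'ezout theorem.
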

 We can represent Vogel's $v$-cycles in terms of the first Chern class $c_1(\mathcal{L})$ and the total Segre class of a normal cone $C_WV$ by \cite[Theorem 2.4.2, Corollary 2.4.7]{joins1999}.

  As a generalization of previous results on the degree of the 3-secant variety $\sigma_3(X)$ for arbitrary dimension of $X$, the main theorem is presented as follows: 
 
 \begin{thm}

    Let $X$ be a nonsingular projective variety embedded by a 5-very ample line bundle. Let $n$ and $d$ be the dimension and  the degree of $X$. Then the degree of the 3-secant variety $\sigma_3(X)$ is given by the following formula :
$$ \frac{1}{3!} \left( d^3 - \sum_{k=0}^{n} d \; a_{n,k} \;  \deg s_k(T_X) + \sum_{k=0}^{n} \sum_{a=0}^{k} 2^{k-a+n+1} {3n+2 \choose n-k} \; \deg(s_a(T_X) \cdot s_{k-a}(T_X))\right)  $$
where $a_{n,k} = {2n+1 \choose n-k} + 2\sum_{i=k}^{n} (-1)^{i-k} {3n+2 \choose n-i}{ i-k+n \choose n}. $
\end{thm}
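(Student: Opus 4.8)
The plan is to realize the 3-secant variety as the join $\sigma_3(X) = J(X, \sigma_2(X))$ and to compute its degree by the generalized double point formula of Theorem 2.1.15, thereby reducing everything to intersection data on (a smooth model of) the 2-secant variety. A general point of $\sigma_3(X)$ lies on a unique secant plane meeting $X$ in three reduced, unordered points, and establishing this — together with birationality of the join map onto $\sigma_3(X)$ — is exactly where $5$-very ampleness enters, since $2r-1 = 5$ is the sharp higher very ampleness needed to control the $r$-secant variety for $r = 3$. I expect the prefactor $\frac{1}{3!}$ to factor as $\frac13 \cdot \frac{1}{2!}$: the $\frac13$ records the overcount of the distinguished point in the join description $X \times \sigma_2(X)$, while the $\frac{1}{2!}$ is the overcount already inherent in the double point count producing $\deg \sigma_2(X)$. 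Correspondingly, the $\binom{2n+1}{n-k}$ appearing in $a_{n,k}$ is the (reindexed) $2$-secant contribution furnished by the ordinary double point formula, and the term $d\,\binom{2n+1}{n-k}\deg s_k(T_X)$ in the main formula is the cone-over-$X$ of that $2$-secant degree.

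To run the generalized double point formula I would choose sections $\underline{\sigma}$ of $\mathcal{L}$ cutting out a zero locus $W$ measuring the failure of the join to behave as a complete intersection, and invoke the expression of the Vogel $v$-cycles $v^i(\underline{\sigma}, V)$ through $c_1(\mathcal{L})$ and the total Segre class $s(C_W V)$ from Theorem 2.4.2 and Corollary 2.4.7. The essential difficulty is that $V = \sigma_2(X)$ is singular along $X$, so $s(C_W V)$ cannot be read off $\sigma_2(X)$ directly. The plan is therefore to pass to the secant bundle $\mathcal{S} \to \mathrm{Hilb}^2(X)$, the $\mathbb{P}^1$-bundle associated to the rank-$2$ bundle $p_*(\mathcal{L}|_Z)$ on the Hilbert scheme of two points, whose tautological map $\psi \colon \mathcal{S} \to \mathbb{P}^N$ has image $\sigma_2(X)$ and is, by $3$-very ampleness, birational onto it. On the nonsingular $\mathcal{S}$ the relevant Chern and Segre classes can be computed honestly and then transported back through $\psi$.

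The computational core is to evaluate the resulting intersection numbers on $\mathcal{S}$ and push them down. Using $\mathrm{Hilb}^2(X) = \mathrm{Bl}_\Delta(X \times X)/(\mathbb{Z}/2)$ I would decompose the Chern classes of $T_{\mathcal{S}}$ and of the tautological bundle into classes pulled back from the two factors of $X \times X$; pushing forward along $\mathcal{S} \to \mathrm{Hilb}^2(X) \to X \times X$ and down to a point then converts the two-factor terms into the diagonal self-intersection numbers $\deg(s_a(T_X) \cdot s_{k-a}(T_X))$, while the single-factor and diagonal-correction terms produce the $\deg s_k(T_X)$ contributions. I expect the powers of $2$, culminating in the coefficient $2^{k-a+n+1}$, to come from the push-forward along the $\mathbb{P}^1$-bundle and the degree-$2$ universal subscheme, and the binomials $\binom{3n+2}{n-k}$, together with the alternating sum in $a_{n,k}$, to come from expanding $(1+c_1(\mathcal{L}))$-type factors to the codimension fixed by $\dim \sigma_3(X) = 3n+2$.

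The last step is to assemble the three pieces — the principal term $d^3$, the linear correction $\sum d\,a_{n,k}\deg s_k(T_X)$, and the product term — and divide by $3!$. The step I expect to be the main obstacle is the correct bookkeeping of the excess-intersection contributions: the diagonal of $X \times X$, the singular locus $X \subset \sigma_2(X)$, and the sublocus of $\sigma_3(X)$ swept out by secant planes degenerating onto $\sigma_2(X)$ each contribute normal-cone terms that must be isolated and subtracted through the $v$-cycle formalism. It is the precise combinatorics of these corrections, rather than any individual class computation, that I expect to produce the alternating binomial sum defining $a_{n,k}$; and verifying that $5$-very ampleness leaves no further excess components is what should make the final formula clean.
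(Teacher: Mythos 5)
Your proposal follows essentially the same route as the paper: realize $\sigma_3(X)$ as the embedded join $X\,\sigma_2(X)$, apply the join-theoretic double point formula (the paper invokes it in the packaged form of \cite[Theorem 8.2.8]{joins1999} rather than running the $v$-cycle machinery by hand) with the factor $\frac{1}{3!}=\frac{1}{3}\cdot\frac{1}{2}$ accounted for exactly as you describe, and resolve the singular $\sigma_2(X)$ by the secant bundle $\mathbb{P}(E_{2,\mathcal{L}})$ over $X^{[2]}$ so that the needed Segre class of the normal cone along $\Delta(X)$ becomes a divisor computation for $Z_2\cong Bl_{\Delta}(X\times X)$ pushed forward to $X\times X$, yielding the $\deg s_k(T_X)$ and $\deg(s_a(T_X)\cdot s_{k-a}(T_X))$ terms from the exceptional divisor and the $h_1,h_2$ classes just as you anticipate. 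The only substantive computation you leave open is the identification $[Z_2]|_{Z_2}=2E+\eta^*(h_1-h_2)$ (via $c_1(E_{2,\mathcal{L}})=H-\delta$ and adjunction), which is where the paper's powers of $2$ actually originate, but your outline is consistent with it.
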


  When attempting to compute the degree of the 3-secant variety for arbitrary dimension of $X$, it is not possible to imitate the approach in \cite{lehn} by Lehn. It is because the universal family $Z_3 \subset X \times X^{[3]}$ may be singular(cf. \cite{GF93} ). Additionally the refined Bezout's theorem cannot be applied directly. One possible approach is to set $V$ as the triple join $J(X, X, X)$ of $X$ and $\sigma$ as the addition map defined by $$[x_0, \cdots x_N,y_0, \cdots, y_N,z_0, \cdots z_N] \mapsto [x_0+y_0+z_0, \cdots , x_N + y_N + z_N].$$
  
  The image $\sigma(V)$ is the 3-secant variety $\sigma_3(X)$. However the indeterminacy locus of $\sigma$ can be non-reduced, making computation of $v^i(\underline{\sigma}, V)$ much more difficult. Therefore it is not advisable to imitate the approach in \cite{joins1999}. 

  To circumvent these issues, we propose to use the secant bundle, which was introduced by Schwarzenberger in \cite{Schw64} for the purpose of giving fiber bundle structure of secant lines. While the original construction was based on symmetric product of a given variety, rather than the Hilbert scheme of points, in this paper we adopt the convention presented in \cite{bertram1992moduli} and \cite{vermeire2009singularities} where the secant bundle is a projective bundle over a Hilbert scheme of points. This approach has been used in previous works such as \cite{bertram1992moduli},  \cite{ullery}, \cite{vermeire2001} and \cite{vermeire2009singularities}, where the secant bundles was used as a tool for describing the singularity and the normality of the 2-secant variety. In this paper, we utilize the birational morphism from the secant bundle to the 2-secant variety as a resolution of singularities.

In Section 2, we define the total Segre class of a cone and introduce a  generalized version of the double point formula. This  formula allows us to compute the total Segre class $s(X, \sigma_2(X))$ in order to determine the degree of the 3-secant variety. However, singularities of the 2-secant variety may impede this computation. To overcome this issue, we introduce the notion of higher very ampleness of a line bundle $\mathcal{L}$ and the secant bundle in Section 3. We take the secant bundle  as a nonsingular birational model for the 2-secant variety when the line bundle $\mathcal{L}$ satisfies the higher very ampleness condition. As shown in \cite{ullery}, the inverse image of $X$ under this birational morphism is isomorphic to the universal family $Z_2 \subset X \times X^{[2]}$ when $\mathcal{L}$ is 3-very ample. Since the codimension of $Z_2$ in the secant bundle is 1, we can treat $Z_2$ as an effective divisor, which helps in computation. In Section 4, we derive the main theorem which provides the degree formula for the 3-secant variety by using the refined Bezout's theorem and the total Segre class computed in Section 3. Additionally, we compute the multiplicity of $\sigma_2(X)$ along $X$ as a corollary to main theorem which is already known in \cite[Proposition 6.1]{ChouSong18} . 
In Section 5, we apply the main theorem to cases of curves and surfaces with explicit calculations. \\

\noindent \textbf{Acknowledgments.} This research was partially supported by the Institute for Basic Science (IBS-R032-D1-2022-a00) and the author would like to thank to Yongnam Lee for his suggestions for research topics. I also would like to thank to Haesong Seo and Chiwon Yoon for suggesting the notation of points of Hilbert scheme of points.

\section{Generalized double point formula}
  The generalized version of the double point formula is one of a useful application of the refined Bezout's theorem in intersection theory. The double point formula allows us to compute the degree of the intersection of two subvarieties of a projective space. In this paper, we will use it as a starting point to understand the complexity of the 3-secant variety in terms of the 2-secant variety.

To set the stage for the generalized version of double point formula, we first introduce some notation and conventions. Let $X$ be an algebraic scheme over a field $k$ and $S^{\cdot} := \bigoplus_{\nu} S^{\nu}$ be a sheaf of graded $\mathcal{O}_X$-algebras. We assume that the map $\mathcal{O}_X \to S^{0}$ is surjective, $S^1$ is coherent, and $S^{\cdot}$ is generated by $S^1$. We define the cone of $S^{\cdot}$ as the relative spectrum $C:= \mathbf{Spec}(S^{\cdot})$ and the projection $C \to X$. The projective cone of $S^{\cdot}$ is defined as the relative projective spectrum $P(C) := \mathbf{Proj}(S^{\cdot})$ with the projection $p:P(C) \to X$. We also define the projective completion of $C$ by $P(C \oplus 1) := \textbf{Proj}( S^{\cdot}[z])$ where $z$ is an indeterminate, with projection $q : P(C \oplus 1) \to X$, and let $\mathcal{O}(1)$ be the tautological line bundle on $P(C\oplus 1)$.
 \begin{rem} 
  Throughout this paper, we use the convention for projective bundle as in \cite[Appendix B.5.5]{fulton2013}.
  \end{rem}

 Let $A_k(X)$ be the Chow group of dimension $k$ algebraic cycles of $X$, and let $A^k(X)$ be the Chow group of codimension $k$ algebraic cycles of $X$. The notation $A_*(X)$ stands for the direct sum of $A_k(X)$ for each $k \geq 0$, considering algebraic cycles in terms of their dimensions. Similarly, $A^*(X)$ stands for the direct sum of $A^k(X)$ for each $k \geq 0$, considering algebraic cycles in terms of their codimensions. 
 With these conventions in place, we can now define the Segre class of a cone.

 \begin{defn}[{\cite[Chapther 4]{fulton2013}}]
 For a variety $V$, we denote the algebraic cycle corresponding to $V$ as $[V]$.  
     The \textit{Segre class} of $C$ is the class in $A_*(X)$ defined by 
  $$s(C) := q_*(\sum_{i \geq 0} c_1(\mathcal{O}(1))^i \cap [P(C\oplus 1)])$$
  or equivalently, $$s(C) = p_*(\sum_{i \geq 0} c_1(\mathcal{O}(1))^{i-1} \cap [P(C)]).$$
 \end{defn}

  The Segre class associated with the normal cone of a closed immersion $X \hookrightarrow Y$ is denoted as $s(C_XY)$ or simply $s(X, Y)$. The dimension $i$ component of the Segre class $s(C)$ is represented by $s_i(C)$, and for the specific case of $s(C_XY)$, we use the notation $s_i(C_XY) $. \\

   Consider a smooth algebraic variety $X$ of dimension $n$, and let $\Delta(X) \subset X \times X$ denote the diagonal embedding. The normal cone $C_{\Delta(X)}(X \times X)$ is canonically isomorphic to the geometric vector bundle associated to the tangent sheaf $T_{X}$. For a locally free sheaf $T_X$, we adopt the codimension convention with respect to X for lower indices of Chern classes and Segre classes, denoted as  $s_k(T_X)$. (cf. \cite[Chapter 3.1]{fulton2013})

    However, when considering the normal cone $C_{\Delta(X)}(X \times X)$, a different convention is used for the dimension of the lower indices of Segre classes. In this case, it is denoted as $s_{n-k}(C_{\Delta(X)}(X \times X))$ (cf. \cite[Chapter 4.1]{fulton2013}). The relationship between these two conventions is established as follows:

 \begin{equation} \label{segre tangent bd normal cone}
 s_k(T_X)  = s_{n-k}(C_{\Delta(X)}(X \times X)).
\end{equation}

For the better understanding of the generalized version of the double point formula, we give a definition of two kind of join varieties. 
\begin{defn} [{\cite[Chapter 1.3]{joins1999}}]
Let $X$ and $Y$ be subvarieties of $\mathbb{P}^N$. The \textit{embedded join} of $X$ and $Y$ is the closure of the union of all lines connecting a point in $X$ to a point in $Y$, denoted $XY$. 
\end{defn}

\begin{defn}
 Let $X$ be a subvariety of $\mathbb{P}^N$ and $Y$ be a subvariety of $\mathbb{P}^M$. The \textit{abstract} or \textit{ruled join} $J(X,Y)$ is the set of points $[x_0 : \cdots : x_N : y_0 : \cdots : y_M]$ of $\mathbb{P}^{N+M+1}$ where $[x_0 : \cdots : x_N]$ is a point of $X$ and $[y_0 : \cdots : y_M]$ is a point of $Y$. Simply, we define $J(X,Y)$ as $$ \{ [x:y] \in \mathbb{P}^{N+M+1} | x \in \hat{X} \; \textrm{and} \; y \in \hat{Y} \} $$ where $\hat{X}$ and $\hat{Y}$ are affine cones of $X$ and $Y$, respectively. 
\end{defn}

  Let $X$ and $Y$ be two projective subvarieties of the projective space $\mathbb{P}^N$, and let $x$ and $y$ be closed points of $X$ and $Y$, respectively, with coordinate representations $[x_0: \cdots : x_N]$ and $[y_0: \cdots : y_N]$. We define a rational map $J(X, Y) \dashrightarrow XY$ as a linear projection such that 
$$[x_0: \cdots : x_N: y_0: \cdots : y_N] \mapsto [x_0-y_0: \cdots : x_N-y_N].$$
The indeterminate locus of this map is defined by the equations $$x_0-y_0 = \cdots = x_N-y_N = 0.$$ Since the map $J(XY) \dashrightarrow XY$ is dominant by \cite[p.26]{joins1999}, we can define the degree of the rational map $J(X,Y) \dashrightarrow XY$. Denote the degree by $\textrm{deg}(J/XY)$.

The intersection $X \cap Y$ is embedded into the product variety $X \times Y$ along its diagonal embedding. Let $C_{X \cap Y}(X \times Y)$ denote the normal cone of $X \cap Y$ to $X \times Y$. The following theorem decomposes the information of the embedded join into simpler pieces:

\begin{thm}  [{\cite[Theorem 8.2.8]{joins1999}}] For subvarieties $X, Y$ of $\mathbb{P}^N$ of degree $d_X,$ resp. $d_Y$ and dimension $n, $ resp. $m$ we have
   \begin{equation*}
       \deg XY  \deg(J/XY) = d_Xd_Y \; - \sum_{k \geq 0} {n+m+1 \choose k} \; \deg s_k(C_{X\cap Y}(X \times Y)). 
   \end{equation*}  \end{thm}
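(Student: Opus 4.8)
The plan is to run the refined B\'ezout theorem (Theorem 2.1.15) on the ruled join. Set $V := J(X,Y)\subset\mathbb{P}^{2N+1}$, let $\mathcal{L}:=\mathcal{O}_V(1)$, and take the $N+1$ sections $\sigma_i := x_i-y_i$. The rational map $\sigma\colon V\dashrightarrow\mathbb{P}^N$ they define is precisely the projection $J(X,Y)\dashrightarrow XY$ from the excerpt, so its image is $XY$ and, as $V$ is irreducible, the only component $\Gamma$ contributing to the last sum of Theorem 2.1.15 is $V$ itself, with $\deg(\Gamma/\sigma(\Gamma))=\deg(J/XY)$. The common zero scheme $W$ of the $\sigma_i$ is $\{x_i=y_i\ \forall i\}\cap V$, whose support is the diagonal copy of $X\cap Y$ inside $J(X,Y)$. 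Since the Hilbert series of the join is the product of those of $X$ and $Y$, one has $\deg V=\deg J(X,Y)=d_Xd_Y$. Feeding this into Theorem 2.1.15 gives
\[
 d_Xd_Y \;=\; \sum_i\deg v^i(\underline{\sigma},V)\;+\;\deg(J/XY)\,\deg(XY),
\]
so the whole problem collapses to computing the base-locus contribution $\sum_i\deg v^i(\underline{\sigma},V)$.

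For that term I would invoke the expression of Vogel's $v$-cycles through $c_1(\mathcal{L})$ and the total Segre class of the normal cone (Theorem 2.4.2 and Corollary 2.4.7). Concretely, resolving $\sigma$ by blowing up $W$ produces an exceptional divisor $E=\mathbb{P}(C_WV)$ with $\tilde{\sigma}^*\mathcal{O}(1)=\pi^*\mathcal{L}-E$, whence $\deg(XY)\deg(J/XY)=\int_{\tilde V}(\pi^*c_1(\mathcal{L})-E)^{\dim V}$. Expanding this binomial of total degree $\dim V=n+m+1$ and rewriting the powers of $E$ as Segre classes of $C_WV$ yields
\[
 \sum_i\deg v^i(\underline{\sigma},V)\;=\;\sum_{j\geq 0}\binom{n+m+1}{j}\,\deg\!\big(c_1(\mathcal{L})^j\cap s_j(C_WV)\big),
\]
the binomial coefficient $\binom{n+m+1}{j}$ being forced by the exponent $\dim V$. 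Because $W$ is the diagonal image of $X\cap Y$ and the embedding $X\cap Y\hookrightarrow\mathbb{P}^{2N+1}$ factors through the diagonal of $X\cap Y\subset\mathbb{P}^N$, the restriction $c_1(\mathcal{L})|_W$ is simply the hyperplane class $H$ of $X\cap Y$; hence $\deg(c_1(\mathcal{L})^j\cap s_j(C_WV))$ is the ordinary degree in $\mathbb{P}^N$ of the dimension-$j$ cycle $s_j(C_WV)$.

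The crux is to trade the normal cone $C_WV=C_{X\cap Y}J(X,Y)$ for the cone $C_{X\cap Y}(X\times Y)$ that appears in the statement. I would use that near $W$ the assignment $[x:y]\mapsto(x,y)$ realizes $J(X,Y)$ as a smooth fibration of relative dimension one over $X\times Y$ which carries $W$ isomorphically onto the diagonal copy of $X\cap Y$ as a section. This gives a splitting $C_{X\cap Y}J(X,Y)\cong C_{X\cap Y}(X\times Y)\oplus L$, with $L$ the one-dimensional fibre direction, and $L$ should contribute trivially to the Segre class, so that $s_j(C_WV)=s_j(C_{X\cap Y}(X\times Y))$ in $A_*(X\cap Y)$. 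Substituting this identity turns the displayed sum into $\sum_j\binom{n+m+1}{j}\deg s_j(C_{X\cap Y}(X\times Y))$, and combining with the refined B\'ezout identity of the first paragraph yields the theorem. I expect this last cone comparison --- following the diagonal through the join and confirming that the extra fibre direction does not affect the Segre class --- to be the main obstacle; the case $X=Y=\mathbb{P}^N$, where $C_WV=\mathcal{O}(1)^{\oplus(N+1)}$ and $C_{X\cap Y}(X\times Y)=T_{\mathbb{P}^N}$ share the total Segre class $(1+H)^{-(N+1)}$ and the identity $\sum_j\binom{2N+1}{j}\deg s_j(C_{X\cap Y}(X\times Y))=1$ holds, is a reassuring consistency check.
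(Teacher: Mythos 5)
The paper offers no proof of this statement---it is quoted verbatim from Flenner--O'Carroll--Vogel with only a pointer to the refined B\'ezout theorem and the Segre-class expression of Vogel's $v$-cycles---and your argument follows exactly that indicated route with correct bookkeeping: $\deg J(X,Y)=d_Xd_Y$, the base scheme $W$ of the sections $x_i-y_i$ is scheme-theoretically (not merely set-theoretically) the diagonal copy of the intersection scheme $X\cap Y$, and the blow-up expansion of $(\pi^*c_1(\mathcal{L}))^{n+m+1}=\bigl((\pi^*c_1(\mathcal{L})-E)+E\bigr)^{n+m+1}$ does collapse, via $\binom{d}{j}\binom{d-j}{k}=\binom{d}{k}\binom{d-k}{j}$ and $\sum_{j\ge1}(-1)^{j-1}\binom{d-k}{j}=1$, to the coefficients $\binom{n+m+1}{k}$. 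The one step you flag as the main obstacle is indeed the crux but goes through as you guess: $J(X,Y)\setminus(X\cup Y)\to X\times Y$ is a $\mathbb{G}_m$-torsor carrying $W$ isomorphically onto $\Delta(X\cap Y)$ as a local section, so locally (and in fact globally) $C_WJ(X,Y)\cong C_{X\cap Y}(X\times Y)\times\mathbb{A}^1$, where the extra factor is the restriction of the relative tangent bundle of the torsor and hence canonically trivial---this triviality is needed, since $s(C\oplus L)=c(L)^{-1}\cap s(C)$ would otherwise shift the Segre classes---giving $s(C_WV)=s(C_{X\cap Y}(X\times Y))$ as required.
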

    This formula is derived from the refined Bezout's theorem, which is based on Vogel's $v$ and $\beta$ cycle construction. For further explanations, see \cite[Chapter 2]{joins1999} and \cite[Chapter 8]{joins1999}.

 If we put $Y = X$, then the embedded join $XY$ is the 2-secant variety $\sigma_2(X)$.  Assume that $\mathcal{O}_X(1)$ is a 3-very ample line bundle. By \cite[Theorem 8.2.8]{joins1999}, we get a degree formula for the 2-secant variety :
\begin{equation} \label{eq1}
\deg \sigma_2(X) = \frac{1}{2} \left( d_X^2 - \sum_{k \geq 0} {2n+1 \choose k} \; \deg s_k(C_{\Delta(X)}(X \times X)) \right) .
\end{equation}

If we set $Y = \sigma_2(X)$, then the embedded join $XY$ is the 3-secant variety $\sigma_3(X)$. It seems good to apply \cite[Theorem 8.2.8]{joins1999} for the degree formula of the 3-secant variety. In this case, $d_Y$ is the degree of the 2-secant variety $\sigma_2(X)$. In order to compute, we need the degrees of the Segre classes $s_k(C_{\Delta(X)}(X \times \sigma_2(X)))$.

If $\mathcal{O}_X(1)$ is 5-very ample, the rational map $J(X,Y) \dashrightarrow XY$ has degree 3. (We will prove it at Lemma \ref{lem}) Then we obtain the degree formula for the 3-secant variety $\sigma_3(X)$ as follows : 
\begin{equation} \label{eq2}
\frac{1}{3} \left( \frac{d_X \; d_Y}{2}  - A  \right)
\end{equation}
where 
\begin{equation*}
 d_Y = \deg \sigma_2(X)
\end{equation*}  and
\begin{equation} \label{eq3}
A:= \sum_{k \geq 0} {3n + 2 \choose k} \deg s_k(C_{\Delta(X)} (X \times \sigma_2(X))).
\end{equation}

    The equation (\ref{eq1}) is just a linear combination of Segre classes of tangent bundle of $X$. 
    Therefore, it is sufficient to calculate the term $s(C_{\Delta(X)}(X \times \sigma_2(X)))$ in order to determine the degree of the 3-secant variety. However, the normal cone $C_{\Delta(X)} (X \times \sigma_2(X))$ has singularities along the diagonal $\Delta(X)$, which must be expressed in simpler forms for the calculation to proceed.

   \section{The secant bundle}

In this section, we will introduce and utilize the concept of the secant bundle as a nonsingular birational model for the 2-secant variety. The secant bundle is a projective bundle over the Hilbert scheme $X^{[2]}$, which represent length 2 subschemes of $X$. The fiber of the projection morphism of the secant bundle at each point $\xi$ of $X^{[2]}$ corresponds to either a secant line or a tangent line spanned by $\xi$. Under sufficient high very ampleness of embedding line bundle, the distinct secant and tangent lines do not intersect in the secant bundle.

\begin{defn} [cf. {\cite{BS88} and \cite{catanese90}}]
    A line bundle $\mathcal{L}$ on a complete algebraic variety $X$ over an algebraically closed field $k$ is \textit{d-very ample} if, for every zero-dimensional subscheme $Z$ of $X$ with length less than or equal to $d+1$, the restriction map $$r_Z: H^0(X, \mathcal{L}) \to H^0(X, \mathcal{L} \otimes \mathcal{O}_Z)$$ is surjective.
\end{defn}

\begin{rem}
 Note that for two integers $d_1 \geq d_2$, if a line bundle $\mathcal{L}$ is $d_1$-very ample, $\mathcal{L}$ is also $d_2$-very ample.
\end{rem}

A line bundle $\mathcal{L}$ is 0-very ample if and only if it is spanned by global sections, and it is 1-very ample if and only if it is very ample. For instance, the $d$-uple Veronese embedding of $\mathbb{P}^n$ is embedded by the $d$-very ample line bundle $\mathcal{O}(d)$, as shown in \cite[Corollary 2.1 and Proposition 2.2]{BS93}.

   Let $X^{[2]}$ be the Hilbert scheme of 2 points of $X$ and $\mathbb{P}^N$ be the projective space $\mathbb{P}(H^0(X, \mathcal{L}))$. It is known that $X^{[2]}$ is smooth for any dimension of $X$(cf. \cite{GF93}). We consider the projections $\pi_1 : X \times X^{[2]} \to X$ and $\pi_2 : X \times X^{[2]} \to X^{[2]}$, and the universal family $Z_2 \subset X \times X^{[2]}$. We let $I_{Z_2}$ be the ideal sheaf of $Z_2$ on $X \times X^{[2]}$ and $\mathcal{O}_{Z_2}$ be the structure sheaf of $Z_2$.

 We construct the following exact sequence of sheaves:
$$ 0 \to \pi_1^*\mathcal{L} \otimes I_{Z_2} \to \pi_1^* \mathcal{L} \to \pi_1^*\mathcal{L} \otimes \mathcal{O}_{Z_2} \to 0.$$

Since the restriction of $\pi_2$ to $Z_2$, $\pi_2|_{Z_2} : Z_2 \to X^{[2]}$ is flat of degree 2, the sheaf $\pi_{2 \; *}(\pi_1^* \mathcal{L} \otimes \mathcal{O}_{Z_2})$ is locally free of rank 2. Denote the tautological bundle associated with $\mathcal{L}$ by 
$$E_{2,\mathcal{L}} = \pi_{2\; *}(\pi_1^* \mathcal{L} \otimes \mathcal{O}_{Z_2}).$$
   The fiber of the vector bundle $E_{2,\mathcal{L}}$ at a point $\xi \in X^{[2]}$ is given by $H^0(X, \mathcal{L} \otimes \mathcal{O}_{\xi})$ by Grauert's theorem.  
    Since $\mathcal{L}$ is 1-very ample, the restriction map $$H^0(X, \mathcal{L}) \to H^0(X, \mathcal{L} \otimes \mathcal{O}_{\xi})$$ is surjective. This implies that the morphism $\pi_{2*} \pi_1^* \mathcal{L} \to E_{2,\mathcal{L}}$ is also surjective. As a result, we have a composition of morphisms
$$\mathbb{P}(E_{2,\mathcal{L}}) \to \mathbb{P}(H^0(X, \mathcal{L})) \times X^{[2]} \to \mathbb{P}(H^0(X,\mathcal{L})) \cong \mathbb{P}^N.$$
 Let $\pi : \mathbb{P}(E_{2, \mathcal{L}}) \to X^{[2]}$ be the projection map of projective bundle. In \cite{vermeire2009singularities}, it is shown that if $\mathcal{L}$ is a 1-very ample line bundle, the image of this map is the 2-secant variety $\sigma_2(X)$. We denote this map as $$r : \mathbb{P}(E_{2,\mathcal{L}}) \to \sigma_2(X).$$ The projective bundle $\mathbb{P}(E_{2,\mathcal{L}})$ is known as the secant bundle of lines, and it is birational to the 2-secant variety $\sigma_2(X)$ if $\mathcal{L}$ is 3-very ample (as shown in \cite{bertram1992moduli} and \cite{vermeire2009singularities}).

It is a well-known fact that the universal family $Z_2$ is isomorphic to the blow-up of $X \times X$ along its diagonal $\Delta(X)$. See \cite[Remark 2.5.4]{gottsche2006hilbert}. We denote the blow-up morphism by $$\eta: Bl_{\Delta(X)} (X \times X) \to X \times X$$ and the involution map by $$\rho: Bl_{\Delta(X)} (X \times X) \cong Z_2 \to X^{[2]}.$$

The exceptional divisor of $\eta$ on $Bl_{\Delta(X)} (X \times X)$ is denoted by $E$. The projections $X \times X \to X$ are denoted by $\textrm{pr}_i$. The following diagram commutes:

\begin{equation*}
\xymatrix@=20pt{
    Z_2 \ar[r]^{\eta} \ar[d]^{\rho} & X \times X \ar[d] \\
    X^{[2]} \ar[r]^{\epsilon} & X^{(2)} }
\end{equation*}

where $X^{(2)}$ is the quotient $(X \times X)/ S_2$ and $\epsilon: X^{[2]} \to X^{(2)}$ is the Hilbert-Chow morphism.

     In the argument above \cite[Lemma 1.2]{ullery}, it is shown that the set theoretic inverse image $r^{-1}(X)$ under the map $$r:\mathbb{P}(E_{2,\mathcal{L}}) \to \sigma_2(X)$$ is isomorphic to $Z_2$ when $ \mathcal{L}$ is 3-very ample. We can also prove that this is true for the scheme theoretic inverse image. At first, we observe that scheme theoretic fibers of $r$ at each closed points of $X$ is reduced. For that, we need to compute on sheaves of graded algebras. 
     
     \begin{prop}
   Let $X$ be a nonsingular projective variety of dimension $n$ embedded by a 3-very ample line bundle $\mathcal{L}$. For each closed point $x$ of $X$, the scheme-theoretic fiber $r^{-1}(x)$ is isomorphic to the blow up of $X$ along $x$.
     \end{prop}

\begin{proof}
 Consider a closed point $x$ of $X$, and identify the following isomorphisms : 
   \begin{align*}
 X^{[2]} \times \{x\} & \approx \textbf{Proj} \; \textrm{Sym}^{\cdot} H^0(X, \mathcal{L}|_{x}) \otimes_{k} \mathcal{O}_{X^{[2]}}\\
   X^{[2]} \times \mathbb{P}^N & \approx \textbf{Proj} \; \textrm{Sym}^{\cdot} H^0(X, \mathcal{L}) \otimes_{k} \mathcal{O}_{X^{[2]}}\\
   \mathbb{P}(E_{2, \mathcal{L}}) & \approx \textbf{Proj} \; \textrm{Sym}^{\cdot} E_{2, \mathcal{L}}\\
 \end{align*}

  Express the scheme-theoretic fiber $r^{-1}(x)$ as a scheme-theoretic intersection of $\mathbb{P}(E_{2, \mathcal{L}})$ with $X^{[2]} \times \{x\}$. Denote $S$ as $ \textrm{Sym}^{\cdot} H^0(X, \mathcal{L}) \otimes_{k} \mathcal{O}_{X^{[2]}}$. Also, for coherent sheaves of ideals $I$ and $J$ of $S$, denote $ \textrm{Sym}^{\cdot} H^0(X, \mathcal{L}|_{x}) \otimes_{k} \mathcal{O}_{X^{[2]}}$ and $ \textrm{Sym}^{\cdot} E_{2, \mathcal{L}}$ by $S/I$ and $S/J$, respectively. Let $I^l. J^l$ and $S^l$ be degree $l$ part of $I, J$ and $S$. Then the scheme-theoretic intersection $r^{-1}(x)$ is obtained by $\textbf{Proj} \; S^{\cdot}/(I^{\cdot} + J^{\cdot}) $.
 
 Denote $F^l$ by $S^l/(I^l + J^l)$. The following diagram is a push-out diagram. 
\begin{equation*}
\xymatrix@=20pt{
    \textrm{Sym}^l H^0(X, \mathcal{L}) \otimes_{k} \mathcal{O}_{X^{[2]}} \ar[r]^{f^l} \ar[d]^{g^l} & \textrm{Sym}^l H^0(X, \mathcal{L}|_{x}) \otimes_{k} \mathcal{O}_{X^{[2]}} \ar[d] \\
    \textrm{Sym}^l E_{2,\mathcal{L}} \ar[r] & F^l }
\end{equation*}
 Hence we have 
$$ F^l \approx \left( \textrm{Sym}^l H^0(X, \mathcal{L}|_{x}) \otimes_{k} \mathcal{O}_{X^{[2]}} \oplus \textrm{Sym}^l E_{2, \mathcal{L}} \right) / N^l,$$
where $N^l$ is the image of morphism $(f^l, -g^l)$. 

 Let $[\xi]$ be a closed point of $X^{[2]}$ representing a zero-dimensional subscheme $\xi \subset X$, and denote $N^l|_{[\xi]}$ as a fiber of $N^l$. Then the fiber $F^l|_{[\xi]}$ is isomorphic to $\left( \textrm{Sym}^l H^0(X, \mathcal{L}|_{x}) \oplus \textrm{Sym}^l H^0(X, \mathcal{L}|_{\xi}) \right)/N^l|_{[\xi]}$.  
 Since $\textbf{Proj}\; F^l$ is isomorphic to $r^{-1}(x)$ and its support is homeomorphic to the blow up of $X$ along $x$, it suffices to show the vector space dimension of $F^l|_{[\xi]}$ is 1. Consider only when $\xi$ contains $x \in X$.

  As $g^1|_{[\xi]} : H^0(X, \mathcal{L}) \to H^0(X, \mathcal{L}|_{\xi})$ is surjective, its symmetrization $g^l$ is also surjective. Thus the rank of $F^l|_{[\xi]}$ is at most one. To prove the rank of $F^l|_{[\xi]}$ is at least 1, show that there is no section $s$ of $\textrm{Sym}^l H^0(X, \mathcal{L})$ such that $g^l(s) = s|_{\xi}$ is zero but $f^l(s) = s|_{x}$ is nonzero. For $l=1$, this is impossible since $x$ lies on $\xi$. For $l > 1$, consider the following : 
 $$ \textrm{Sym}^l H^0(X, \mathcal{L}) \approx \bigoplus_{a+b = l} \textrm{Sym}^a \ker g^1 \otimes_{k} \textrm{Sym}^b H^0(X, \mathcal{L}|_{\xi} ).$$

  As $g^l$ is surjective, we deduce that $\ker g^l$ is isomorphic to $$\bigoplus_{a+b = l, b<l} \textrm{Sym}^a \ker g^1 \otimes_{k} \textrm{Sym}^b H^0(X, \mathcal{L}|_{\xi} ).$$
This implies that if $g^l(s)$ is zero, then $f^l(s) = s|_{x}$ is also zero. Therefore, $F^l|_{[\xi]}$ is isomorphic to $\textrm{Sym}^l F^1|_{[\xi]}$ and $r^{-1}(x)$ is isomorphic to a relative projective spectrum of line bundle on the blow up of $X$ along $x$. Thus we prove this proposition.
\end{proof}

 Now we can establish that the scheme-theoretic inverse image $r^{-1}(X)$ is isomorphic to $Z_2$. Each fiber of $r^{-1}(X) \to X$ is smooth, satisfying conditions for an algebraic family of normal projective varieties. For every closed point $x$ of $X$, there exists a smooth curve $T$ passing through $x$. Consequently, $r^{-1}(T) \to T$ forms a flat family.  
  It is important to note that we can define an ample line bundle $\mathcal{M}$ on $r^{-1}(X)$ with respect to $r$,  given that $r^{-1}(X)$ is embedded in $X^{[2]} \times \mathbb{P}^N$. Subsequently the Hilbert polynomial of $\mathcal{M}$ on each fiber $r^{-1}(y)$ remains constant
 for closed points $y$ of $T$. Due to the connectedness of $X$, we can infer that $r^{-1}(X) \to X$ forms a flat family. So the morphism $r^{-1}(X) \to X$ is a smooth and hence $r^{-1}(X)$ is a nonsingular variety that is also reduced.\\

     Note that $X \times X^{[2]}$ is a closed subvariety of  $\mathbb{P}^N \times X^{[2]}$ and hence we can regard $Z_2$ is a closed subvariety of the secant bundle $\mathbb{P}(E_{2,\mathcal{L}})$ in a natural way. By adjusting the isomorphism, we can ensure that the composition of the maps $Z_2 \to X \times X^{[2]} \to X$ corresponds to the composition of maps $q:=\textrm{pr}_1 \circ \eta$. From this point on, we will identify $r^{-1}(X)$ with $Z_2$.

    Denote by $\Gamma_q$ the graph of $q: Z_2 \to X$. Let $\textrm{id}_X \times r: X \times \mathbb{P}(E_{2,\mathcal{L}}) \to X \times \sigma_2(X)$ be the product morphism. Consider the following diagram.

    \begin{equation}  \label{diagram of gamma}
    \xymatrix@=20pt{
        \Gamma_q \ar[r] \ar[d] & X \times Z_2 \ar[r] \ar[d] & X \times \mathbb{P}(E_{2, \mathcal{L}}) \ar[d]^{\textrm{id}_X \times r} \\
        \Delta(X) \ar[r] & X \times X \ar[r] &X \times \sigma_2(X)}
    \end{equation}

     The right-side box represents a pull back square, as $Z_2$ is the scheme-theoretic inverse image of $X$ under $r$. The left-side box is also a pull back square due to the property of the graph.

      Even if we find a nonsingular birational model $(\Gamma_q, X \times \mathbb{P}(E_{2,
 \mathcal{L}}))$ for the original pair $(\Delta(X), X \times \sigma_2(X))$ we still require a comparison theorem for two Segre classes of pairs. The following proposition establishes a strong connection between Segre classes.

 \begin{prop} [{\cite[Proposition 4.2 (a)]{fulton2013}}] Let $f : Y^{'} \to Y$ be a morphism of pure-dimensional schemes, $X \subset Y$ a closed subscheme, $X^{'} = f^{-1}(X)$ the inverse image scheme, $g : X^{'} \to X$ the induced morphism.

 If $f$ is proper, $Y$ irreducible, and $f$ maps each irreducible component of $Y^{'}$ onto $Y$, then 
$$ g_*(s(X^{'}, Y^{'})) = \deg(Y^{'}/Y) \; s(X, Y). $$ \end{prop}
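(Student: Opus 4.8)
The plan is to reduce the asserted identity to a single cycle-level computation on the exceptional divisors of two blow-ups, linked by the projection formula. Concretely, I would blow up both spaces along their respective centers: set $\tilde Y = \mathrm{Bl}_X Y$ and $\tilde Y' = \mathrm{Bl}_{X'} Y'$, with blow-up morphisms $b : \tilde Y \to Y$, $b' : \tilde Y' \to Y'$ and exceptional Cartier divisors $E \subset \tilde Y$, $E' \subset \tilde Y'$. Because $X' = f^{-1}(X)$ means $I_{X'} = I_X \cdot \mathcal{O}_{Y'}$, the ideal $I_X \cdot \mathcal{O}_{\tilde Y'}$ equals $\mathcal{O}_{\tilde Y'}(-E')$ and is invertible; the universal property of $\tilde Y$ then produces a unique morphism $\tilde f : \tilde Y' \to \tilde Y$ with $b \circ \tilde f = f \circ b'$ and $\tilde f^* E = E'$ as Cartier divisors. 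Restricting to exceptional divisors gives $\tilde g : E' \to E$ fitting into a commuting diagram with $g : X' \to X$ and the projections $p : E \to X$, $p' : E' \to X'$.

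The key input that keeps the argument noncircular is the description of the exceptional divisor as the projectivized normal cone: $E = \mathbb{P}(C_X Y)$ with $\mathcal{O}_E(1) = \mathcal{O}_{\tilde Y}(-E)|_E$, and likewise for $E'$. With this, the Segre class is computed through the alternative formula recorded above, $s(X, Y) = \sum_{i \ge 1} p_*\big(c_1(\mathcal{O}_E(1))^{i-1} \cap [E]\big)$, which follows from the definition of $s(C)$ and does not invoke the statement being proved. Since $\tilde f^* E = E'$ gives $\mathcal{O}_{E'}(1) = \tilde g^* \mathcal{O}_E(1)$, I would push $s(X', Y')$ forward along $g$, use $g \circ p' = p \circ \tilde g$, and apply the projection formula to move the Chern classes across $\tilde g$; this collapses the whole expression to $\sum_{i\ge 1} p_*\big(c_1(\mathcal{O}_E(1))^{i-1} \cap \tilde g_*[E']\big)$.

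Everything then reduces to the single cycle identity $\tilde g_*[E'] = \deg(Y'/Y)\,[E]$. I would establish it with the projection formula for Cartier divisors: since $E' = \tilde f^* E$ and $E'$ contains no component of $\tilde Y'$, one has $[E'] = E' \cap [\tilde Y'] = \tilde f^* E \cap [\tilde Y']$, hence $\tilde f_*[E'] = E \cap \tilde f_*[\tilde Y']$. As $\tilde f$ agrees with $f$ over a dense open of $Y$, both blow-ups being isomorphisms there, each component of $\tilde Y'$ dominates $\tilde Y$ with the same total generic degree, so $\tilde f_*[\tilde Y'] = \deg(Y'/Y)\,[\tilde Y]$ and $E \cap \tilde f_*[\tilde Y'] = \deg(Y'/Y)\,[E]$; being supported on $E$, this class equals $\tilde g_*[E']$. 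Substituting back yields $g_* s(X', Y') = \deg(Y'/Y)\, s(X, Y)$.

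The main obstacle is exactly this last identity, and the difficulty is conceptual rather than computational. The number $\deg(Y'/Y)$ is a generic invariant of $f$ measured away from $X$, whereas $s(X,Y)$ lives entirely on a blow-up of $X$ and $g = f|_{X'}$ carries none of this data — indeed $X' \to X$ can be an isomorphism while $\deg(Y'/Y) > 1$. Passing to the blow-ups is precisely what transports the generic degree onto the exceptional divisor, so the care required is in verifying that $\tilde f^* E = E'$ as Cartier divisors and that the pure-dimensionality and domination hypotheses survive the blow-up, guaranteeing $\deg(\tilde Y'/\tilde Y) = \deg(Y'/Y)$.
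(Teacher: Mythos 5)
The paper does not actually prove this statement—it is quoted verbatim from Fulton's \emph{Intersection Theory}—so the only meaningful comparison is with Fulton's own argument, and your proof is essentially that argument: blow up both centers, identify the exceptional divisors with the projectivized normal cones carrying $\mathcal{O}(1)=\mathcal{O}(-E)|_E$, and reduce via the projection formula to the single identity $\tilde g_*[E']=\deg(Y'/Y)\,[E]$. The argument is correct; the only point left implicit is the degenerate case $X=Y$, where $E=\varnothing$ and the formula $s(X,Y)=\sum_{i\ge1}p_*\bigl(c_1(\mathcal{O}_E(1))^{i-1}\cap[E]\bigr)$ fails (there $s(X,Y)=[X]$ and the claim is checked directly), after which $X\subsetneq Y$ together with the hypothesis that every component of $Y'$ dominates $Y$ guarantees that no component of $Y'$ lies in $X'$, so the blow-ups are pure-dimensional of the expected dimension and every step you invoke is valid.
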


     According to \cite[Proposition 4.2 (a)]{fulton2013}, we have 
\begin{equation} \label{eq main}
(\textrm{id}_X \times r)_* s( \Gamma_q, X \times \mathbb{P}(E_{2,\mathcal{L}}) ) = s(\Delta(X), X \times \sigma_2(X)).
\end{equation}

 Equation (\ref{eq main}) can be used to calculate (\ref{eq3}). Consider the following closed immersions:
 \begin{equation*}
     \Gamma_q \subset X \times Z_2 \subset X \times \mathbb{P}(E_{2,\mathcal{L}}).
 \end{equation*}

 Since each term is nonsingular, following three closed immersions are regular immersions. 
 \begin{align*}
 \Gamma_q &\subset X \times Z_2 \\
 \Gamma_q &\subset X \times \mathbb{P}(E_{2, \mathcal{L}}) \\
 X \times Z_2 &\subset X \times \mathbb{P}(E_{2, \mathcal{L}})
 \end{align*}

 Thus we have the following exact sequence of normal bundles: 
 \begin{equation} \label{eq5}
     0 \to N_{\Gamma_q/X \times Z_2} \to N_{\Gamma_q / X \times \mathbb{P}(E_{2,\mathcal{L}})} \to N_{X \times Z_2 / X \times \mathbb{P}(E_{2,\mathcal{L}})}|_{\Gamma_q} \to 0.
 \end{equation}
 Let $p : \Gamma_q \to Z_2$ be an isomorphism induced in \ref{diagram of gamma}. After simplification, we obtain the following:
\begin{align*}
  s(N_{\Gamma_q / X \times Z_2}) &= (\textrm{id} \times r|_{\Gamma_q})^*s(T_{\Delta(X)})\\
  s(N_{X \times Z_2 / X \times \mathbb{P}(E_{2,\mathcal{L}})}|_{\Gamma_q}) &= p^* s(N_{Z_2 / \mathbb{P}(E_{2,\mathcal{L}})}).
\end{align*}
 Using (\ref{eq5}) and applying Whitney's formula, we get:
 \begin{equation} \label{eq9}
     s(\Gamma_q, X \times \mathbb{P}(E_{2,\mathcal{L}})) = (\textrm{id} \times r|_{\Gamma_q})^*s(T_{\Delta(X)}) \cdot p^* s(Z_2, \mathbb{P}(E_{2,\mathcal{L}})).
 \end{equation}

 Consider the following pull back square of morphisms between varieties :

\begin{equation*}
    \xymatrix@=20pt{
 \Gamma_q \ar[r]^p \ar[d] & Z_2 \ar[d]^{r|_{Z_2}} \\
     \Delta(X) \ar[r]^{\Delta^{-1}} & X}
\end{equation*}

where $\Delta^{-1} : \Delta(X) \to X$ is an inverse of the diagonal embedding of $X$.
 By flat base change on Chow groups, we get $$ (\textrm{id} \times r|_{\Gamma_q})_* \circ p^* s(Z_2, \mathbb{P}(E_{2,\mathcal{L}})) = (\Delta^{-1})^* \circ r_* s(Z_2, \mathbb{P}(E_{2,\mathcal{L}})).$$ Note that $s(T_{\Delta(X)}) \cong (\Delta^{-1})^*s(T_X)$. From (\ref{eq main}) and (\ref{eq9}), we get the following:
\begin{equation} \label{eq10}
 s(\Delta(X), X \times \sigma_2(X)) = (\Delta^{-1})^* ( s(T_X) \cap r_*s(Z_2, \mathbb{P}(E_{2,\mathcal{L}}))).
\end{equation}

 Again by \cite[Proposition 4.2]{fulton2013}, we get
 \begin{equation} \label{eq6}
     s(\Delta(X), X \times \sigma_2(X)) = (\Delta^{-1})^* ( s(T_X) \cap s(X, \sigma_2(X)).
 \end{equation}

 So, it remains to compute the total Segre class $s(N_{Z_2 / \mathbb{P}(E_{2,\mathcal{L}})})$. Since both $r^{-1}(X)$ and $\mathbb{P}(E_{2,\mathcal{L}})$ are nonsingular varieties, we can regard $r^{-1}(X)$ as an effective Cartier divisor of $\mathbb{P}(E_{2,\mathcal{L}})$. So we can compute $s( Z_2,\mathbb{P}(E_{2,\mathcal{L}}) ) $ by \cite[Corollary 4.2.2]{fulton2013} as follow:
 \begin{equation} \label{eq7}
     s( Z_2,\mathbb{P}(E_{2,\mathcal{L}})) =  \frac{[Z_2]}{1 + [Z_2]}  .
 \end{equation}

 In order to proceed, it is necessary to express the term $[Z_2]$ in terms of the tautological line bundle $\zeta$ of $\mathbb{P}(E_{2,\mathcal{L}})$ and $\pi^* \beta$, where $\beta$ is a divisor on $X^{[2]}$ where $\pi : \mathbb{P}(E_{2, \mathcal{L}}) \to X^{[2]}$ is the bundle projection map. (cf. \cite[Chapter 3.3]{fulton2013}) This is achieved by calculating the first Chern class of $E_{2,\mathcal{L}}$ in proposition \ref{prop 2}.

Let $h$ be the divisor corresponding to a line bundle $\mathcal{L}$ on $X$. We denote the pullback of $h$ under the $i$-th projection by $h_i := \textrm{pr}_i^* h$. Since the morphism $\rho : Z_2 \to X^{[2]}$ is an involution map, we have 
$$\rho_* \eta^* h_1 = \rho_* \eta^* h_2.$$ Since $\mathbb{P}(E_{2,\mathcal{L}})$ is a nonsingular variety, we define $H = \rho_* \eta^* h_1 = \rho_* \eta^* h_2$. Let $\partial X^{[2]}$ be the exceptional divisor of Hilbert-Chow morphism $X^{[2]} \to X^{(2)}$. We can represent $\partial X^{[2]}$ as $2\delta$ where $\delta$ is a Cartier divisor on $X^{[2]}$. Note that $\delta = \frac{1}{2}\rho_* E$ as Cartier divisors.

 \begin{prop} \label{prop 2}
     $c_1(E_{2,\mathcal{L}}) = H - \delta$ as a cycle of $A^1(X^{[2]})_{\mathbb{Q}}$.
 \end{prop} 
\begin{proof}
Let $\pi : \mathbb{P}(E_{2,\mathcal{L}}) \to X^{[2]}$ be the projection map of a projective bundle. 
    Consider the normal bundle $\mathcal{N}:=N_{Z_2 / X\times X^{[2]}}$ and the closed immersion $j: Z_2 \to X \times X^{[2]}$ with the composition $q: Z_2 \to X$ of $\eta$ and the first projection. The morphism $\pi_1|_{Z_2} : Z_2 \to X^{[2]}$ is a finite flat morphism, and $\pi_{2 \; *} (\pi_1^* \mathcal{L} \otimes \mathcal{O}_{Z_2} )$ is a locally free sheaf on $X^{[2]}$, so by Grauert's theorem again, all higher direct images $R^i\pi_{2 \; *} (\pi_1^* \mathcal{L} \otimes \mathcal{O}_{Z_2} )$ vanish for $i \geq 1$. Let $T_X$ be the tangent bundle of $X$.

Since $j$ is an affine morphism, there are no higher direct images and we obtain followings :
 $$\textrm{ch} \; E_{2,\mathcal{L}} = \pi_{2 \; *} ( \textrm{ch} \; (j_* \mathcal{O}_{Z_2}) \cdot \pi_1^* \textrm{ch} \; \mathcal{L} \cdot \pi_1^* \textrm{td} \; T_X)$$ and $$\textrm{ch} \; j_*\mathcal{O}_{Z_2} = j_*((\textrm{td} \; \mathcal{N})^{-1})$$ 
 by applying the Grothendieck-Riemann-Roch theorem. Note that $\pi_2 \circ j = \rho$.

By the projection formula, we have:
\begin{align*} 
    \textrm{ch} \; E_{2,\mathcal{L}} &= \pi_{2 \; *} ( \textrm{ch} \; (j_* \mathcal{O}_{Z_2}) \cdot \pi_1^* \textrm{ch} \; \mathcal{L} \cdot \pi_1^* \textrm{td} \; T_X)  \\ 
     &= \pi_{2 \; *} ( j_* (\textrm{td} \; \mathcal{N} )^{-1} \cdot \pi_1^* \textrm{ch} \; \mathcal{L} \cdot \pi_1^* \textrm{td} \; T_X) \\
     &= (\pi_{2 \; *} \circ j_*)( (\textrm{td} \; \mathcal{N})^{-1} \cdot j^*\pi_1^* \textrm{ch} \; \mathcal{L} \cdot j^* \pi_1^* \textrm{td} \; T_X) \\ 
     &= \rho_*( (\textrm{td} \; \mathcal{N})^{-1} \cdot   q^* \textrm{ch} \; \mathcal{L} \cdot q^* \textrm{td} \; T_X ).
\end{align*}

Recall that $E$ is the exceptional divisor of $\eta : Z_2 \cong Bl_{\Delta(X)} X \times X \to X \times X$. Consider the exact sequence $$ 0 \to \mathcal{N}^* \to q^* \Omega_X \to \mathcal{O}_E(-E) \to 0 $$ as given in \cite[Lemma 2.1]{GF93}. (Note that the morphism $q$ in proposition \ref{prop 2}  and in \cite{GF93} are different morphisms.)  
 Taking dual of this sequence, we obtain:
 $$ 0 \to q^* T_X \to \mathcal{N} \to \mathcal{E}xt^1(\mathcal{O}_E(-E), \mathcal{O}_{Z_2}) \to 0.$$

 Therefore we have $$(\textrm{td} \; \mathcal{N})^{-1} = (q^* \textrm{td} \; T_X)^{-1} \cdot (\textrm{td} \;  \mathcal{E}xt^1(\mathcal{O}_E(-E), \mathcal{O}_{Z_2}))^{-1}.$$ 
 
 To evaluate $ \textrm{td} \; \mathcal{E}xt^1(\mathcal{O}_E(-E), \mathcal{O}_{Z_2}) $, we consider the exact sequence: $$0 \to \mathcal{O}_{Z_2}(-2E) \to \mathcal{O}_{Z_2}(-E) \to \mathcal{O}_E(-E) \to 0.$$

Taking the dual of this sequence, we have:
$$ 0 \to \mathcal{O}_{Z_2}(E) \to \mathcal{O}_{Z_2}(2E) \to \mathcal{E}xt^1(\mathcal{O}_E(-E), \mathcal{O}_{Z_2}) \to 0.$$

 Therefore, we obtain 
 $$ \textrm{td} \; \mathcal{E}xt^1(\mathcal{O}_E(-E), \mathcal{O}_{Z_2}) = \frac{2E}{1 - e^{-2E}} / \frac{E}{1 - e^{-E}} = \frac{2}{1 + e^{-E}}$$
 and  
 \begin{align*}
     \textrm{ch} \; E_{2,\mathcal{L}} &= \rho_* \left(q^*\textrm{ch} \; \mathcal{L} \cdot \frac{1}{2} (1+e^{-E}) \right) \\
                                 &= \rho_*\left( \left(1 + \eta^* h_1 + \frac{1}{2!} \eta^* h_1^2 + \frac{1}{3!} \eta^* h_1^3 + \cdots \right)\cdot \left( 1 - \frac{1}{2}E + \frac{1}{4} E^2 - \frac{1}{12} E^3 + \cdots \right) \right) \\
                                 &= \rho_* \left( 1 + \left(\eta^*h_1 - \frac{1}{2} E \right) + \left(\frac{1}{2}\eta^*h_1^2 - \frac{1}{2}\eta^*h_1 \cdot E + \frac{1}{4} E^2 \right) + \cdots  \right) \\
                                 &= 2 + (H - \delta ) + \left( \frac{1}{2} \rho_*\eta^*h_1^2- \frac{1}{2}\rho_*(\eta^*h_1 \cdot E) + \frac{1}{4}\rho_* E^2 \right) + \cdots.
 \end{align*}
  So we obtain $c_1(E_{2,\mathcal{L}}) = H-\delta$.  \end{proof}

 In equation (\ref{eq7}), it is deduced that 
 \begin{equation} \label{eq11}
  s(Z_2, \mathbb{P}(E_{2,\mathcal{L}})) = \sum_{i \geq 0} (-1)^i [Z_2]^{i+1} =\sum_{i \geq 0} (-1)^i ([Z_2]|_{Z_2})^i.
 \end{equation}
To continue, we require an expression of $[Z_2]|_{Z_2}$ as a linear combination of generators of the Chow ring $A^*Z_2$. 

 \begin{prop} \label{prop 3}
      $[Z_2]|_{Z_2} = 2E + \eta^*(h_1 - h_2)$ in $A^1 (Z_2)_{\mathbb{Q}}$ where $[Z_2]$ is a cycle associated to scheme $Z_2$.
 \end{prop}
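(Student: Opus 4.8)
The plan is to compute the divisor class $[Z_2]$ inside the secant bundle $\mathbb{P}(E_{2,\mathcal{L}})$ and then restrict it to $Z_2$. Write $\pi : \mathbb{P}(E_{2,\mathcal{L}}) \to X^{[2]}$ for the projection, $\zeta = c_1(\mathcal{O}_{\mathbb{P}(E_{2,\mathcal{L}})}(1))$ for the tautological class, and $\iota : Z_2 \hookrightarrow \mathbb{P}(E_{2,\mathcal{L}})$ for the inclusion of $r^{-1}(X)$, so that $\pi \circ \iota = \rho$. By the projective bundle formula $A^1\mathbb{P}(E_{2,\mathcal{L}}) = \mathbb{Z}\zeta \oplus \pi^* A^1 X^{[2]}$, so I would write $[Z_2] = a\,\zeta + \pi^*\gamma$ for some $a \in \mathbb{Z}$ and $\gamma \in A^1 X^{[2]}$. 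Intersecting with a fiber of $\pi$ (equivalently, using that $\rho = \pi|_{Z_2}$ is finite of degree $2$) forces $a = 2$. Restricting yields $[Z_2]|_{Z_2} = \iota^*[Z_2] = 2\,\iota^*\zeta + \rho^*\gamma$, so the task reduces to evaluating $\iota^*\zeta$ and $\gamma$.

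The first is immediate: since $r$ is induced by the complete linear system of $\mathcal{O}(1)$, one has $\mathcal{O}_{\mathbb{P}(E_{2,\mathcal{L}})}(1) = r^*\mathcal{O}_{\mathbb{P}^N}(1)$, and the composite $r \circ \iota$ equals $Z_2 \xrightarrow{q} X \hookrightarrow \mathbb{P}^N$. Hence $\iota^*\zeta = q^* h = \eta^* h_1$.

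The heart of the argument is the computation of $\gamma$, and this is where I expect the main difficulty, since $\gamma$ lives on $X^{[2]}$ and cannot be recovered from the restriction to $Z_2$ alone. I would extract it from the structure sequence $0 \to \mathcal{O}_{\mathbb{P}(E_{2,\mathcal{L}})}(-Z_2) \to \mathcal{O}_{\mathbb{P}(E_{2,\mathcal{L}})} \to \mathcal{O}_{Z_2} \to 0$ by pushing forward along $\pi$. Using $\pi_*\mathcal{O}(-2\zeta) = 0$, $R^1\pi_*\mathcal{O}_{\mathbb{P}(E_{2,\mathcal{L}})} = 0$, relative duality in the form $R^1\pi_*\mathcal{O}(-2\zeta) \cong (\det E_{2,\mathcal{L}})^\vee$, and the projection formula, the long exact sequence collapses to $0 \to \mathcal{O}_{X^{[2]}} \to \rho_*\mathcal{O}_{Z_2} \to (\det E_{2,\mathcal{L}})^\vee \otimes \mathcal{O}(-\gamma) \to 0$, whence $c_1(\rho_*\mathcal{O}_{Z_2}) = -c_1(E_{2,\mathcal{L}}) - \gamma$. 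On the other hand $\rho : Z_2 \to X^{[2]}$ is a flat double cover, Galois for the involution $\tau$ exchanging the two points; it is ramified exactly along $E$ (on which $\tau$ acts trivially, since $\tau$ is $-1$ on the normal directions of the diagonal), and $E$ maps isomorphically onto the boundary, whose class is $2\delta$. Thus $\rho_*\mathcal{O}_{Z_2} = \mathcal{O}_{X^{[2]}} \oplus \mathcal{O}(-\delta)$ and $c_1(\rho_*\mathcal{O}_{Z_2}) = -\delta$. Feeding in Proposition \ref{prop 2}, namely $c_1(E_{2,\mathcal{L}}) = H - \delta$, I solve $-\delta = -(H - \delta) - \gamma$ to obtain $\gamma = 2\delta - H$.

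Finally I would assemble the pieces. From $\rho^*\rho_* = 1 + \tau^*$ together with $\tau^*\eta^*h_1 = \eta^*h_2$ and $\tau^*E = E$, one gets $\rho^* H = \eta^*(h_1 + h_2)$ and $\rho^*\delta = E$. Substituting,
\begin{align*}
[Z_2]|_{Z_2} &= 2\,\eta^* h_1 + \rho^*(2\delta - H) \\
&= 2\,\eta^* h_1 + 2E - \eta^*(h_1 + h_2) = 2E + \eta^*(h_1 - h_2),
\end{align*}
which is the claimed identity. The only genuinely delicate points are the relative-duality computation of $R^1\pi_*\mathcal{O}(-2\zeta)$, which is sensitive to the projective-bundle convention, and the identification of the branch divisor of $\rho$ with $2\delta$; everything else is formal.
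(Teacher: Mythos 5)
Your proof is correct and lands on the same intermediate identity $[Z_2]|_{Z_2} = 2\zeta|_{Z_2} - \rho^*(H-2\delta)$ as the paper, but by a genuinely different route. The paper never computes the class of $Z_2$ in $A^1\mathbb{P}(E_{2,\mathcal{L}})$ itself: it works only with the restriction, obtained from the adjunction $c_1(\mathcal{O}_{Z_2}([Z_2])) = K_{Z_2} - K_{\mathbb{P}(E_{2,\mathcal{L}})}|_{Z_2}$ combined with the relative canonical bundle formula $K_{\mathbb{P}(E_{2,\mathcal{L}})} = -2\zeta + \pi^*c_1(E_{2,\mathcal{L}}) + \pi^*K_{X^{[2]}}$ and the ramification formula $\rho^*K_{X^{[2]}} = K_{Z_2} - E$. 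You instead pin down the full class $[Z_2] = 2\zeta + \pi^*(2\delta - H)$ by pushing the ideal-sheaf sequence forward along $\pi$, using relative duality and the double-cover decomposition $\rho_*\mathcal{O}_{Z_2} = \mathcal{O}_{X^{[2]}} \oplus \mathcal{O}(-\delta)$. The geometric input is morally the same in both arguments --- the fact that $\rho$ is a degree-two cover ramified along $E$ with branch class $2\delta$ enters for you through $\rho_*\mathcal{O}_{Z_2}$ and for the paper through $K_{Z_2} = \rho^*K_{X^{[2]}} + E$ --- and both consume Proposition \ref{prop 2} and the identification $\zeta|_{Z_2} = \eta^*h_1$ in the same way, as well as the pullback identities $\rho^*H = \eta^*(h_1+h_2)$ and $\rho^*\delta = E$. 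Your version buys slightly more, namely the class of $Z_2$ in the Picard group of the whole secant bundle rather than only its restriction, at the cost of the relative-duality step; as you note, its sign depends on the projective-bundle convention, and the convention under which $\mathcal{O}_{\mathbb{P}(E_{2,\mathcal{L}})}(1) = r^*\mathcal{O}_{\mathbb{P}^N}(1)$ (the one forced by the construction of $r$ from the surjection $H^0(X,\mathcal{L})\otimes\mathcal{O} \to E_{2,\mathcal{L}}$, and the one consistent with the paper's canonical bundle formula and Proposition \ref{prop 2}) is exactly the one for which $R^1\pi_*\mathcal{O}(-2\zeta) \cong (\det E_{2,\mathcal{L}})^\vee$, so your computation checks out.
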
 
 
  \begin{proof}
      Recall that $E$ is the inverse of $\Delta(X)$ under the blow up $\eta$ and $\delta$ is a half of Cartier divisor associated to the boundary component of $X^{[2]}$ such that $\delta = \frac{1}{2}\rho_*E$. 
 We denote the canonical divisors of $X^{[2]}$ and $Z_2$ as $K_{X^{[2]}}$ and $K_{Z_2}$, respectively. The ramification divisor of $\rho$ is $E$, so we have
  \begin{equation} \label{eq8}
      \rho^* K_{X^{[2]}} = K_{Z_2} - E .
  \end{equation}
   Equation (\ref{eq8}) can be found in some literature, or it can be deduced directly from the following exact sequence of sheaves:
   \begin{equation*}
       0 \to \rho^* T_{X^{[2]}}^{*} \to T_{Z_2}^{*} \to \mathcal{O}_E(-E) \to 0.
   \end{equation*}

  Let $\zeta$ be the first Chern class of the tautological line bundle of $\mathbb{P}(E_{2,\mathcal{L}})$. 
 The normal bundle $N_{Z_2 / \mathbb{P}(E_{2,\mathcal{L}})} $ is represented by $\mathcal{O}_{Z_2}([Z_2])$, $$c_1(\mathcal{O}_{Z_2}([Z_2])) = K_{Z_2} - K_{\mathbb{P}(E_{2,\mathcal{L}})}|_{Z_2}.$$ In addition, we have the following adjunction formula for the projective bundle:
 $$ K_{\mathbb{P}(E_{2,\mathcal{L}})} = -2\zeta + \pi^* c_1(E_{2,\mathcal{L}}) + \pi^* K_{X^{[2]}}. $$

 Note that $(\pi^* K_{X^{[2]}})|_{Z_2} = \rho^* K_{X^{[2]}} = K_{Z_2} -E = K_{Z_2} -\rho^* \delta$. Then $$c_1(\mathcal{O}_{Z_2}([Z_2])) = 2\zeta|_{Z_2} -\rho^*(H-\delta) + \rho^*\delta$$ by proposition \ref{prop 2}. Recall that $c_1(\mathcal{O}_{Z_2}([Z_2])) = [Z_2]|_{Z_2}$ and hence we get 
 \begin{equation} \label{eq15}
      [Z_2]|_{Z_2} = 2\zeta|_{Z_2} - \rho^*(H -2\delta).
 \end{equation}

 We need information about $\zeta|_{Z_2}$. We know that $Z_2$ is a closed subvariety of $$X \times X^{[2]} \subset \mathbb{P}H^0(X, \mathcal{L}) \times X^{[2]}.$$ Therefore, we can choose a suitable section of $\zeta|_{Z_2}$ whose zero locus is $ \pi_1^*h \cap Z_2$ where $\pi_1 : X \times X^{[2]} \to X$ is a projection map.

Recall that we have an isomorphism $Z_2 \cong Bl_{\Delta(X)} (X \times X)$, which allows us to view $$Z_2 \subset X \times X^{[2]} \to X$$ as equivalent to $$Bl_{\Delta(X)} (X \times X) \to X \times X \to X$$ where the second morphism is $\textrm{pr}_1$. (cf. \cite[Section 1]{ullery}) Thus, we obtain $\zeta|_{Z_2} = \eta^*h_1$.

Substituting this result into (\ref{eq15}), we find that $$[Z_2]|_{Z_2} = 2E + \eta^*(h_1 - h_2).$$
\end{proof}

\section{Main theorem}
 In order to derive the degree formula for the 3-secant variety, it is necessary to utilize each term of \cite[Theorem 8.2.8]{joins1999}. To accomplish this, we present the following lemma.

\begin{lem} \label{lem}
 Let $X$ be a nonsingular projective variety that is embedded by a 5-very ample line bundle. Let $Y$ be the 2-secant variety $\sigma_2(X)$. Let $J$ be the ruled join $J(X, Y)$. Then $\deg(J/XY)$ is 3.
\end{lem}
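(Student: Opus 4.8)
The plan is to compute $\deg(J/XY)$ directly as the number of points in a generic fibre of the dominant rational map $\phi : J(X,Y) \dashrightarrow XY = \sigma_3(X)$. First I would record the dimension count $\dim J(X,Y) = n + \dim \sigma_2(X) + 1 = (3n+2) = \dim \sigma_3(X)$, where the last equalities use that a $5$-very ample, hence sufficiently positive, embedding makes both $\sigma_2(X)$ and $\sigma_3(X)$ non-defective; thus $\phi$ is generically finite and $\deg(J/XY)$ is the cardinality of a generic fibre. Unwinding the definition, a point $[x:y] \in J(X,Y)$ with $x$ in the affine cone of $X$ and $y$ in the affine cone of $\sigma_2(X)$ maps to $[x-y]$, which lies on the line spanned by $[x] \in X$ and $[y] \in \sigma_2(X)$. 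Hence for fixed generic $p \in \sigma_3(X)$ the fibre $\phi^{-1}(p)$ is in bijection with the set of pairs $([x],[y]) \in X \times \sigma_2(X)$, $[x]\neq[y]$, such that $p \in \overline{[x][y]}$; the relative scaling of $x$ and $y$ is then forced, so each such pair contributes exactly one point of the join.

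Next I would exhibit the three expected points of the fibre. Since $p$ is generic in $\sigma_3(X)$ and the embedding is $5$-very ample, $p$ lies on a unique plane $\Pi = \langle a,b,c\rangle$ spanned by three distinct, non-collinear points $a,b,c \in X$, with $p$ in the interior of $\Pi$; in particular $p \notin \sigma_2(X)$. The three edges $\overline{ab}, \overline{bc}, \overline{ca}$ lie in $\sigma_2(X)$, and each of the lines $\overline{ap}, \overline{bp}, \overline{cp} \subset \Pi$ meets the opposite edge. Writing $y_a = \overline{ap} \cap \overline{bc}$, and similarly $y_b, y_c$, the three pairs $(a,y_a), (b,y_b), (c,y_c)$ lie in $\phi^{-1}(p)$ and are pairwise distinct.

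It then remains to show no further points occur. Suppose $([x],[y]) \in \phi^{-1}(p)$ with $[y]$ lying on a secant line $\langle d,e\rangle$, $d,e \in X$ (valid for generic $[y]$; the finitely many degenerate positions of $[y]$ are excluded by the genericity of $p$). Then $p \in \langle x,d,e\rangle$, so $\{x,d,e\}$ is a $3$-secant representation of $p$, and uniqueness of the decomposition forces $\{x,d,e\} = \{a,b,c\}$. Hence $[x] \in \{a,b,c\}$ and $[y]$ lies on the opposite edge, recovering exactly the three pairs above; the degenerate cases $[y] \in X$ or $[x] = [y]$ are ruled out since they would place $p \in \sigma_2(X)$. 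Working in characteristic zero the generic fibre is reduced, so each pair counts with multiplicity one, giving $\deg(J/XY) = 3$.

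The main obstacle is the identifiability statement used throughout: that a generic point of $\sigma_3(X)$ admits a unique unordered $3$-secant decomposition by distinct, non-collinear points of $X$. This is precisely where $5$-very ampleness must be invoked, via non-defectivity of $\sigma_2(X)$ and $\sigma_3(X)$ together with triviality of the entry locus of a generic point. Controlling this entry locus—ruling out positive-dimensional or larger fibres of the abstract secant parametrization—is the technical heart of the argument, and I expect it to require either a direct appeal to the higher-very-ampleness results already cited (in the spirit of \cite{ullery} and \cite{vermeire2009singularities}) or a Terracini-type tangency computation.
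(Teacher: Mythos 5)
Your proposal follows essentially the same route as the paper: pass to a general point $w \in \sigma_3(X) \setminus \sigma_2(X)$, use $5$-very ampleness to obtain a unique plane $\langle a,b,c\rangle$ through $w$ spanned by three points of $X$, and identify the three fibre points as the pairs consisting of a vertex together with the intersection of the corresponding cevian with the opposite edge. The one step you leave open --- identifiability of the $3$-secant decomposition, which you flag as the technical heart and expect to require non-defectivity results or a Terracini-type tangency computation --- is closed in the paper by a one-line elementary argument: two distinct secant planes through $w$ meet in at least the point $w$, hence together span at most a $\mathbb{P}^4$, so the six spanning points of $X$ would fail to impose independent conditions, contradicting $5$-very ampleness (the paper cites Remark 1.7 of Cattaneo for this). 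No tangency computation is needed, and the same count also handles non-reduced length-$3$ decompositions, since $5$-very ampleness is a statement about arbitrary length-$6$ subschemes; this is also the cleaner way to justify the degenerate cases you dismiss by ``genericity of $[y]$.'' On the other hand, your explicit verification that the fibre contains nothing beyond the three exhibited points --- reducing any pair $([x],[y])$ in the fibre to a $3$-secant representation of $p$ and invoking uniqueness --- is more careful than the paper's proof, which simply asserts that the three points are exactly the preimage.
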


\begin{proof} Let $w$ be a general point of $\sigma_3(X) - \sigma_2(X)$. If there are two secant planes that contain $w$, 6 points of $X$ do not satisfy the independent condition of 5-very ampleness.(cf. \cite[Remark 1.7]{cattaneo2020}) So, there exists a unique plane spanned by three points of $X$ that contains $w$. Let $x, y,$ and $z$ be three distinct points of $X$ such that their linear span contains $w$. Let $a, b,$ and $c$ be the points of intersection of $\overline{xw}$ with $\overline{yz}$, $\overline{yw}$ with $\overline{zx}$, and $\overline{zw}$ with $\overline{xy}$, respectively. Then, the three points of the ruled join $J(X, Y)$ corresponding to the ratios between ${(x,w), (w,a)}, {(y,w), (w,b) }$, and ${(z,w), (w,c)}$ are exactly the inverse image of the rational map $J(X, Y) \dashrightarrow XY$.
\end{proof}

  Note that $s_k(C_{\Delta(X)}(X \times X)) = s_{n-k}(T_X)$ as a Segre class of a locally free sheaf. With this notation established, we now proceed to the proof of the main theorem:

 \begin{proof}[Proof of main theorem] 
     
 Note that $X$ is a smooth projective variety of dimension $n$, and $E \subset Z_2$ is isomorphic to a projective bundle associated with the tangent bundle : $\mathbb{P}(T_X^*) \cong P(C_{\Delta(X)}(X \times X))$ over $X$. Recall that $h$ is a Cartier divisor associated with $\mathcal{L}$ on $X$, and $h_i$ a Cartier divisor on $X \times X$ defined by $ h_i = \textrm{pr}_i^* \; h$.  Using equations (\ref{eq6}), (\ref{eq11}), and proposition \ref{prop 3}, we obtain:
  \begin{align} \label{eq12}
      s(X, \sigma_2(X)) &= q_* \left( \sum_{i \geq 0} (-1)^i (2E + \eta^*(h_1-h_2))^i \right).
  \end{align}

  Let $g : P(C_{\Delta(X)}(X \times X)) \to X$ be the projection map. Let $\mathcal{O}(1)$ be the tautological line bundle of $P(C_{\Delta(X)}(X \times X))$. Since $E$ and $P(C_{\Delta(X)}(X \times X))$ are isomorphic, $$ E|_{E} = c_1(\mathcal{O}(-1))$$ holds.

  Therefore we have 
  $$\eta_* E^i = (-1)^{i-1} \eta_* (-E|_E)^{i-1} = (-1)^{i-1} s_{i-n}(T_{X}) = (-1)^{i-1}s_{2n-i}(C_{\Delta(X)}(X \times X)) $$ for $i \geq n$ by (\ref{segre tangent bd normal cone}). Since $\eta^*(h_1-h_2) \cdot E$ is zero, we obtain that 
  \begin{equation} \label{eq13}
   \eta_* \sum_{l=n}^{2n} (-1)^l (2E + \eta^*(h_1-h_2))^l = \sum_{l=n}^{2n} (-1)^l(h_1-h_2)^l - \sum_{l=n}^{2n} 2^l s_{l-n}(T_X) \cap [X] . 
\end{equation}
  
 The remaining calculation simply involves taking the first projections, which yields: 
 \begin{align*}
     (\textrm{pr}_1)_* (h_1-h_2)^l &= \sum_{a = 0}^{l} (-1)^{l-a} { l \choose a } h^a \; (\textrm{pr}_1)_* \textrm{pr}_2^* \; h^{l-a} \\
                                   &= (-1)^n \;d\; {l \choose l-n} h^{l-n}.  \qquad (l \geq n)
 \end{align*}

 By substituting (\ref{eq13}) into (\ref{eq12}) we have:
 \begin{align*}
     s(X, \sigma_2(X)) &= \sum_{l=n}^{2n} (-1)^{n+l} \;d\; {l \choose n} \;h^{l-n}  - \sum_{l=n}^{2n} 2^l s_{l-n}(T_X) \cap [X] \\
                       &= \sum_{i=0}^{n} (-1)^i \; d\; { i+n \choose n} \; h^i - \sum_{i=0}^{n}2^{i+n}s_i(T_X) \cap [X] .
 \end{align*}

From equation (\ref{eq10}), we obtain:
 \begin{align*}
s( \Delta(X), X \times \sigma_2(X)) &=  s(T_X) \cap s(X, \sigma_2(X)) \\
                                    &= \sum_{i=0}^{n} \sum_{a+b=i} \{ (-1)^b \;d\;{ b+n \choose n} \;h^b \cdot s_a(T_X) \cap [X] - 2^{b+n}s_a(T_X) \;s_b(T_X) \cap [X] \}.
      \end{align*}
 by omitting the pull back $(\Delta(X) \to X)^*$. 
 Therefore, we obtain the main formula from equation (\ref{eq2}), which completes the proof. \end{proof}

 \begin{rem} Recall that we use the convention for projective bundles and tautological line bundles as in \cite[Appendix B.5.5]{fulton2013}. \end{rem}

\begin{cor}[cf. {\cite[Proposition 6.1]{ChouSong18}} and {\cite[Chapter 4.3]{fulton2013}}]
The multiplicity of $\sigma_2(X)$ along $X$ is $d - 2^n$.
\end{cor}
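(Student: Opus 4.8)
The plan is to read off the multiplicity directly from the leading term of the Segre class $s(X,\sigma_2(X))$, which is already available from the proof of the main theorem. The essential input is Fulton's description of multiplicity along a subvariety (\cite[Chapter 4.3]{fulton2013}): if $W$ is a $k$-dimensional subvariety of a variety $V$, then the dimension-$k$ component of $s(W,V)\in A_*(W)$ equals $e_W(V)\,[W]$, where $e_W(V)$ is the Hilbert--Samuel multiplicity of the local ring $\mathcal{O}_{V,W}$, i.e. the multiplicity of $V$ along $W$. Since $X\subset\sigma_2(X)$ is a subvariety of dimension $n$ (and $\sigma_2(X)$ has dimension $2n+1$ under the $5$-very ampleness hypothesis, so the codimension is positive), it suffices to isolate the top-dimensional part $s(X,\sigma_2(X))_n$ and extract the coefficient of $[X]$.

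First I would recall the formula obtained in the proof of the main theorem,
\[
s(X,\sigma_2(X)) = \sum_{i=0}^{n} (-1)^i\, d\, \binom{i+n}{n}\, h^i \;-\; \sum_{i=0}^{n} 2^{\,i+n}\, s_i(T_X)\cap[X],
\]
and then extract its dimension-$n$ part. In the first sum the class $h^i\cap[X]$ has dimension $n-i$, so only the term $i=0$ contributes, namely $d\,[X]$. In the second sum $s_i(T_X)\cap[X]$ has dimension $n-i$ (with the convention that $s_i(T_X)$ is the codimension-$i$ Segre class, as fixed earlier via $s_k(T_X)=s_{n-k}(C_{\Delta(X)}(X\times X))$), so again only $i=0$ survives, contributing $2^{n}\,s_0(T_X)\cap[X]=2^{n}[X]$ because $s_0(T_X)=1$. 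Hence $s(X,\sigma_2(X))_n=(d-2^{n})\,[X]$, and the multiplicity of $\sigma_2(X)$ along $X$ equals $d-2^{n}$.

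The computation itself presents no real obstacle, since all the genuine work — the birational model via the secant bundle, Proposition \ref{prop 2}, Proposition \ref{prop 3}, and the pushforward computation of $s(X,\sigma_2(X))$ — was carried out in establishing the main theorem. The only points requiring care are bookkeeping ones: matching the dimension/codimension indexing conventions for the Segre classes so that the top-dimensional term is correctly identified, and verifying the hypotheses of Fulton's multiplicity statement (that $X$ is a genuine proper subvariety of the variety $\sigma_2(X)$, which is immediate). As a consistency check, the resulting value $d-2^{n}$ agrees with \cite[Proposition 6.1]{ChouSong18}.
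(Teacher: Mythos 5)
Your proposal is correct and follows essentially the same route as the paper: the paper's proof is exactly to read off the coefficient of $[X]$ in $s(X,\sigma_2(X))$, and your extraction of the dimension-$n$ part ($d[X]$ from the $i=0$ term of the first sum and $2^n[X]$ from the $i=0$ term of the second) matches. You merely make explicit the appeal to Fulton's characterization of multiplicity via the leading term of the Segre class, which the paper leaves implicit in its citation of \cite[Chapter 4.3]{fulton2013}.
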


\begin{proof}
    The multiplicity of $\sigma_2(X)$ along $X$ is the coefficient of $[X]$ in the class $s(X, \sigma_2(X))$.
\end{proof}

\section{Examples}
\begin{Ex} [In the case of curves]
  Consider the case where $C$ is a smooth projective curve. Let $g$ be the genus of a curve $C$. The degree of each term of Segre class $T_C$ is given by $\deg s_0(T_C) = d$ and $\deg s_1(T_C) = 2g-2$. By substituting those terms in the main formula we get: 
  $$ \deg \sigma_3(C) = \frac{1}{3!} (d^3 - 9d^2 +26d +24 -6dg -24g)  $$
  which can be found in \cite[Proposition 1]{soule04}. \end{Ex}

\begin{Ex} [In the case of surfaces]
   Consider the case where $S$ is a smooth projective surface. Let $K$ be the canonical divisor of $S$, let $d = h^2$, let $\pi = h \cdot K$, let $\kappa = K^2$, and let $e = c_2$ be the topological Euler characteristic.
The total Segre class of the tangent sheaf $T_S$ of $S$ has degree 0, 1, and 2 terms, which are given by:
 $$ s_0(T_S) = [S], \; s_1(T_S) = K, \; s_2(T_S) = \kappa - e.$$
 By substituting those terms in the main formula we get:
 $$ \deg \sigma_3(S) = \frac{1}{3!}(d^3 -30d^2 + 224d -3d(5\pi + \kappa -e) + 192\pi + 56\kappa -40e) $$
 which can be found in \cite[Section 4] {lehn}.

 We can directly calculate the total Segre classes $s(S, \sigma_2(S))$. Note that $[Z_2]|_{Z_2}$ is represented by $ 2E + \eta^*(h_1 - h_2)$. This yields the followings:
 
\begin{align*} 
    ([Z_2]|_{Z_2})^2 &= 4E^2 + 4E \; \eta^*(h_1-h_2) + \eta^*(h_1-h_2)^2  \\ 
    ([Z_2]|_{Z_2})^3 &= 8E^3 + 12E^2 \; \eta^*(h_1-h_2) +6E \; \eta^*(h_1-h_2)^2 +\eta^*(h_1-h_2)^3 \\
    ([Z_2]|_{Z_2})^4 &= 16E^4 + 32E^3 \; \eta^*(h_1-h_2) + 24E^2 \; \eta^*(h_1-h_2)^2 + 8E \; \eta^*(h_1-h_2)^3 + \eta^*(h_1-h_2)^4
\end{align*}

 Since $\eta_* E^l = (-1)^{l-1} \; s_{l-2}(T_{\Delta(S)})$ and $\Delta(S) \cdot ( h_1-h_2) = 0$, we obtain:

 \begin{align*}
\eta_* ([Z_2]|_{Z_2})^2 &= -4[\Delta(S)] + (h_1-h_2)^2           &  q_* ([Z_2]|_{Z_2})^2 &= (d-4)[S]           \\
\eta_* ([Z_2]|_{Z_2})^3 &= 8s_1(T_{\Delta(S)}) + (h_1-h_2)^3     &  q_* ([Z_2]|_{Z_2})^3 &= 8s_1(T_S) + 3dh    \\
\eta_* ([Z_2]|_{Z_2})^4 &= -16s_2(T_{\Delta(S)}) + 6h_1^2h_2^2   &  q_* ([Z_2]|_{Z_2})^4 &= -16s_2(T_S) + 6d^2 .\\
 \end{align*}

The total Segre class $s(S, \sigma_2(S))$ is the summation of the three terms on the right-hand side of the equation. Therefore, the total Segre class $s( \Delta(S), S \times \sigma_2(S))$ is given by:
$$ (d-4)[S] + ((d-12)s_1(T_S) -3dh) + (-8s_1(T_S)^2 + (d-20)s_2(T_S) -3dh \cdot s_1(T_S) + 6d^2).   $$
 Using \cite[Theorem 8.2.8]{joins1999}, we obtain the correct formula for $\deg \sigma_3(S)$ again. \end{Ex}

\end{document}